
\documentclass[11pt]{amsart}
\usepackage{tgpagella}
\usepackage{euler}
\usepackage[T1]{fontenc}
\usepackage{amssymb}
\usepackage{amsmath,amsthm}
\usepackage[hidelinks]{hyperref}
\usepackage{amsthm}
\usepackage[english]{babel}
\usepackage{mathrsfs}
\usepackage{eucal}

\setcounter{MaxMatrixCols}{10}

\newtheorem{main}{Theorem}

\newtheorem{theorem}{Theorem}[section]
\newtheorem{lem}[theorem]{Lemma}
\newtheorem{prop}[theorem]{Proposition}

\theoremstyle{definition}
\newtheorem{definition}[theorem]{Definition}

\def\e{\epsilon}
\def\la{\lambda}

\def\bb{\mathbb}

\def\om{\omega}

\def\bb{\mathbb}

\def\Cal{\mathcal}

\def\cb{\operatorname{cb}}

\DeclareMathOperator{\OS}{\mathcal{OS}}
\DeclareMathOperator*{\id}{{id}}
\DeclareMathOperator*{\CP}{{CP}}
\numberwithin{equation}{section}

\providecommand{\nor}[1]{\lVert #1 \rVert}

\textwidth 5.75in
\oddsidemargin 0.375in
\evensidemargin 0.375in

\begin{document}

\title{CP-stability and the local lifting property}

\author[T. Sinclair]{Thomas Sinclair}
\address{Department of Mathematics, Indiana University, Rawles Hall, 831 E 3rd St, Bloomington, IN 47405, USA}
\email{tsincla@purdue.edu}


\date{\today }
\dedicatory{}
\keywords{}

\begin{abstract} The purpose of this note is to discuss the local lifting property in terms of an equivalent approximation-type property, CP-stability, which was formulated by the author and Isaac Goldbring for the purposes of studying the continuous model theory of C$^*$-algebras and operator systems.

\end{abstract}

\maketitle


\section{Statement of the Main Results}

The following definition first appears in \cite{gs1}.

\begin{definition}An operator system $X$ is said to be \emph{CP-stable} if for any finite dimensional subsystem $E\subset X$ and $\delta>0$ there is a finite-dimensional subsystem $E\subset S\subset X$ and $k, \e>0$ so that for every C$^*$-algebra $A$ and any unital linear map $\phi: S\to A$ with $\nor{\phi}_k<1+\e$ there exists a u.c.p.\ map $\psi: E\to A$ so that $\nor{\phi|_E - \psi}<\delta$.
\end{definition}

Let $u_1,\dotsc, u_n$ be the canonical generators of $C^*(\bb F_n)$ and let $W_n$ be the operator system spanned by the set $\{u_i^*u_j : 1\leq i,j\leq n+1\}$ where $u_{n+1} := 1$. The first result gives a quantitative version of CP-stability for the operator systems $W_n$ using work of Farenick and Paulsen \cite{fp12}.

\begin{main}\label{main1} The operator system $W_n$ is CP-stable. In particular, for any $\delta>0$ there exists $\e>0$ so that for any unital linear map $\phi: W_n\to A$ into an arbitrary unital C$^*$-algebra with $\nor{\phi}_{n+1}<1+ \e$, there is a u.c.p.\ map $\psi: W_n\to A$ so that $\nor{\psi - \phi}<\delta$.

\end{main}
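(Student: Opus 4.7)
The plan is to upgrade the qualitative theorem of Farenick--Paulsen \cite{fp12} --- namely, that every unital linear map $\phi \colon W_n \to B$ with $\nor{\phi}_{n+1} \leq 1$ is automatically u.c.p.\ --- to the quantitative statement via a compactness argument using C$^*$-algebra ultraproducts.

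Suppose for contradiction that the conclusion fails for some $\delta > 0$: one finds unital C$^*$-algebras $A_k$ and unital linear maps $\phi_k \colon W_n \to A_k$ with $\nor{\phi_k}_{n+1} < 1 + 1/k$ but $\nor{\phi_k - \psi} \geq \delta$ for every u.c.p.\ $\psi \colon W_n \to A_k$. Fix a free ultrafilter $\omega$ on $\N$ and form $A := \prod_\omega A_k$. Since $\dim W_n < \infty$, the assignment $\phi(x) := [\phi_k(x)]_\omega$ yields a well-defined unital linear map $\phi \colon W_n \to A$. Via the identification $M_{n+1}(A) = \prod_\omega M_{n+1}(A_k)$, a direct check gives $\nor{\phi}_{n+1} \leq \limsup_\omega \nor{\phi_k}_{n+1} \leq 1$, so Farenick--Paulsen implies $\phi$ is u.c.p.

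The remaining task is to produce u.c.p.\ maps $\psi_k \colon W_n \to A_k$ with $\nor{\phi_k - \psi_k} \to 0$ along $\omega$, which contradicts the standing hypothesis. For this I would use the explicit description (also contained in \cite{fp12}) that a unital linear $\psi \colon W_n \to B$ is u.c.p.\ if and only if $[\psi(u_i^* u_j)]_{i,j=1}^{n+1} \in M_{n+1}(B)_+$. The matrix $M^{(k)} := [\phi_k(u_i^* u_j)] \in M_{n+1}(A_k)$ has diagonal $1_{A_k}$ (by unitality of $\phi_k$) and satisfies $[M^{(k)}]_\omega \in M_{n+1}(A)_+$. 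Standard ultraproduct facts then provide positive representatives $N^{(k)} \in M_{n+1}(A_k)_+$ with $\nor{N^{(k)} - M^{(k)}} \to 0$ along $\omega$; since the diagonal of $N^{(k)}$ then converges to $1_{A_k}$, conjugating by $\mathrm{diag}\bigl((N^{(k)})_{ii}\bigr)^{-1/2}$ (well-defined for $\omega$-large $k$) yields a positive matrix $\tilde N^{(k)}$ with exact unit diagonal still $o_\omega(1)$-close to $M^{(k)}$. Setting $\psi_k(u_i^* u_j) := \tilde N^{(k)}_{ij}$ then defines a u.c.p.\ map with $\nor{\psi_k - \phi_k} \to 0$, contradicting the assumption.

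The main obstacle is the bookkeeping: simultaneously restoring positivity and the \emph{exact} unit diagonal in each $A_k$ (not merely in the ultraproduct), while verifying that each correction is $o_\omega(1)$ in operator norm. A conceptually slicker alternative would be to invoke the LLP of $C^*(\bb F_n)$ to lift $\phi$ coordinate-wise back to the $A_k$, but that would be somewhat circular in a paper whose stated aim is to characterize LLP via CP-stability.
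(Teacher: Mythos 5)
Your argument is correct, and it rests on the same pillars as the paper's proof: the Farenick--Paulsen identification of $W_n$ with the quotient $M_{n+1}/J_{n+1}$ (which is what underwrites both of your appeals to \cite{fp12} --- that a unital map on $W_n$ with $\nor{\cdot}_{n+1}\leq 1$ is automatically u.c.p., and that u.c.p.\ maps on $W_n$ are detected by positivity of the matrix $[\psi(u_i^*u_j)]$), an ultrafilter compactness argument, and the renormalization of the diagonal by conjugating with $\mathrm{diag}\bigl((N^{(k)}_{ii})^{-1/2}\bigr)$. The difference is organizational: the paper first proves CP-stability of $M_{n+1}$ as a standalone lemma (an ultraproduct argument that invokes the Choi--Effros lifting theorem to pull the limiting u.c.p.\ map back to $\prod_k A_k$) and then transports it across the covering map $\gamma_n$ and down to the quotient, whereas you run the compactness argument directly on $W_n$ and replace the lifting theorem by the elementary fact that a positive element of an ultraproduct admits positive representatives (the positive part of the real part of $M^{(k)}$ will do). Your route is more self-contained for this one theorem --- the positive-representative step is exactly the proof of liftability for matrix Choi matrices, inlined --- at the cost of not isolating the reusable quantitative lemma for $M_{n+1}$. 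One item to unpack when writing it up: the ``qualitative theorem'' you attribute to \cite{fp12} is not stated there in the form you quote; it follows from their Theorem 2.4 together with the facts that unital $(n+1)$-contractions are $(n+1)$-positive, that $(n+1)$-positive maps on $M_{n+1}$ are completely positive (Choi), and the universal property of the operator system quotient. That is a citation to expand, not a gap.
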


\noindent Note that by \cite[Corollary 4.7]{CH} no such result can hold for the related generator subsystem of the \emph{reduced} C$^*$-algebra $C^*_\la(\bb F_n)$.

We say an operator system $X$ has the \emph{local lifting property} (LLP) of Kirchberg if for every unital C$^*$-algebra $A$, every ideal $J$ of $A$, and every u.c.p.\ map $\phi: X\to A/J$ and every finite-dimensional subsystem $E\subset X$ the restricted map $\phi|_E$ admits a u.c.p.\ lifting $\tilde\phi: E\to A$.\footnote{The definition we give here is termed the \emph{operator system local lifting property} (OSLLP) in \cite{kavruk, kptt} though for our purposes we will not make a distinction.}  It was shown in \cite{gs2} that for C$^*$-algebras CP-stability is equivalent to the local lifting property. 

Using operator system tensor product characterizations for exactness and the LLP (see \cite{kptt}), Kavruk showed that a finite-dimensional operator system has the LLP if and only if its dual system is exact \cite[Theorem 6.6]{kavruk}. We show that conversely, one can use the fact that the dual is exact (in the sense of admitting a nuclear embedding), i.e., that the operator system is CP-stable, to recover Kirchberg's tensor characterization of the LLP \cite{kptt, kir93}.

\begin{main}\label{main2} If $E$ is a finite-dimensional operator system which is CP-stable, then $E\otimes_{\min} \Cal B(\ell^2) \cong E\otimes_{\max} \Cal B(\ell^2)$ as operator systems.
\end{main}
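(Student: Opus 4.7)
The plan is to first show, using CP-stability, that the finite-dimensional operator system $E$ satisfies the local lifting property, and then to invoke Kirchberg's tensor characterization of the LLP in combination with the weak expectation property of $\Cal B(\ell^2)$ to conclude.

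\textbf{Phase 1.} Given a u.c.p.\ map $\phi: E \to A/J$, I would first produce a unital linear lift $\tilde\phi_0: E \to A$ with $\nor{\tilde\phi_0}_{\cb}$ arbitrarily close to $1$; such a lift exists by a standard completely bounded lifting theorem for finite-dimensional operator systems (in the spirit of Effros--Haagerup). Selecting $k$ and $\e$ from CP-stability appropriate to a given $\delta > 0$, one has $\nor{\tilde\phi_0}_k < 1 + \e$, and CP-stability then yields a u.c.p.\ map $\psi: E \to A$ within norm distance $\delta$ of $\tilde\phi_0$, hence an approximate u.c.p.\ lift of $\phi$ modulo $J$. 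An iterative argument with geometrically decreasing $\delta_m$, in which approximate u.c.p.\ lifts are repeatedly refined by $J$-valued u.c.p.\ adjustments, upgrades this to an exact u.c.p.\ lift $\tilde\phi: E \to A$, establishing the LLP for $E$.

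\textbf{Phase 2.} Having established the LLP for $E$, I would invoke an operator-system adaptation of Kirchberg's tensor characterization: for any unital C$^*$-algebra $B$ with WEP one has $E \otimes_{\min} B \cong E \otimes_{\max} B$ as operator systems. Since $\Cal B(\ell^2)$ is injective, it has WEP, which yields the desired isomorphism. The relevant argument uses Pisier's equivalent form of LLP, namely $E \otimes_{\min} C^*(\bb F_\infty) = E \otimes_{\max} C^*(\bb F_\infty)$, together with a realization of $\Cal B(\ell^2)$ as a quotient of $C^*(\bb F_\infty)$ and a short diagram chase exploiting the identity $C^*(\bb F_\infty) \otimes_{\min} \Cal B(\ell^2) = C^*(\bb F_\infty) \otimes_{\max} \Cal B(\ell^2)$ secured by WEP of $\Cal B(\ell^2)$.

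The main obstacle is Phase 1: the iteration upgrading approximate u.c.p.\ lifts to exact ones requires careful bookkeeping to preserve both unitality and complete positivity while driving the residual to zero. At each step one must modify the current u.c.p.\ approximant $\psi_m$ by an element of the u.c.p.\ cone shifted by an ideal-valued correction, producing $\psi_{m+1}$ that remains u.c.p.\ with residual $\pi \circ \psi_{m+1} - \phi$ decaying geometrically. The existence of such corrections is secured by convexity of the u.c.p.\ cone within the relevant Hom-set, in the spirit of Michael's selection theorem. Once LLP is in hand, Phase 2 proceeds along well-established lines.
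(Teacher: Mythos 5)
Your Phase 1 is, in substance, the paper's proposition establishing the equivalence of CP-stability with the LLP, and its outline is workable, but the justification of the initial lift is off: a unital linear lift of $\phi$ with $\nor{\cdot}_{\cb}$ arbitrarily close to $1$ is not a standard black box for finite-dimensional operator systems --- having such lifts is already essentially the LLP. What is actually available is the Robertson--Smith theorem that a u.c.p.\ map from a finite-dimensional operator system into a quotient $A/J$ admits unital $k$-positive liftings for every $k$; such a lift satisfies the hypothesis $\nor{\tilde\phi_0}_k\leq 1<1+\e$ that CP-stability requires, and this is exactly how the paper runs the implication. The passage from approximate u.c.p.\ lifts to an exact one is Arveson's point-norm closedness of liftable maps \cite[Lemma C.2]{BO}; your geometric iteration is an attempt to reprove that lemma and would need the same convexity argument in any case.

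Phase 2 is where the proposal fails in context. The identity $C^*(\bb F_\infty)\otimes_{\min}\Cal B(\ell^2)=C^*(\bb F_\infty)\otimes_{\max}\Cal B(\ell^2)$ is not ``secured by WEP of $\Cal B(\ell^2)$'' in any elementary way: the implication from WEP to that tensor identity \emph{is} Kirchberg's theorem, and the paper states explicitly that Theorems \ref{main1} and \ref{main2} together are intended to give a \emph{new} proof of precisely this fact. (You have also interchanged the two Kirchberg characterizations: $X\otimes_{\min}C^*(\bb F_\infty)=X\otimes_{\max}C^*(\bb F_\infty)$ characterizes the WEP, while the LLP is characterized by the tensor identity against $\Cal B(\ell^2)$ --- which is the very conclusion you are trying to reach.) So your Phase 2 assumes the deep result and proves nothing new. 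The paper's argument is self-contained: by Proposition \ref{prop-dual} (Kavruk duality) $E^*$ is exact, hence admits almost completely order isomorphic embeddings into matrix algebras; dualizing yields u.c.p.\ ``covering'' maps $M_n^*\to E$ whose amplifications are $\e$-surjective onto the positive cones $M_k(E\otimes_{\min}\Cal B(\ell^2))^+$ (Lemma \ref{lem-cover}). The crux is then Lemma \ref{lem-hope}, the elementary fact that $M_n^*\otimes_{\min}\Cal B(\ell^2)=M_n^*\otimes_{\max}\Cal B(\ell^2)$, proved by reducing (via the WEP of $\Cal B(\ell^2)$) to a finite-dimensional second factor $F$ and using the duality $(M_n^*\otimes_{\min}F)^*\cong M_n\otimes_{\max}F^*\cong M_n\otimes_{\min}F^*$. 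Functoriality of $\otimes_{\max}$ and closedness of the positive cones then give $M_k(E\otimes_{\min}\Cal B(\ell^2))^+\subset M_k(E\otimes_{\max}\Cal B(\ell^2))^+$. To repair your proposal you must replace the appeal to Kirchberg's theorem with something playing the role of these two lemmas.
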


\noindent Using techniques from \cite{pis96} or \cite{kavruk} Theorem \ref{main1} and Theorem \ref{main2} give a new proof of the fact (due to Kirchberg \cite{kir94}) that $C^*(\bb F_n)\otimes_{\min} \Cal B(\ell^2) \cong C^*(\bb F_n)\otimes_{\max} \Cal B(\ell^2)$.

The author is grateful to Isaac Goldbring for many stimulating discussions from which these ideas arose.

\section{Proofs of the Main Results}

The following result is due to Farenick and Paulsen \cite{fp12}: see the remarks after Definition 2.1 therein.

\begin{lem}\label{lem1} The ``covering'' map $\gamma_n: M_{n+1}\to W_n$ defined by $\phi(e_{ij}) = \frac{1}{n+1} u_i^*u_j$, where $u_1,\dotsc, u_{n+1}$ are defined as above, is u.c.p.\ and the kernel $J_{n+1}$ consists of all diagonal matrices in $M_{n+1}$ of trace zero.
\end{lem}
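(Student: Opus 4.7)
The plan is to verify the three assertions---unitality, complete positivity, and the kernel description---in turn, leveraging the explicit Stinespring-type form of $\gamma_n$ together with linear independence of distinct elements of the free group inside its full C$^*$-algebra. Unitality is immediate: since each $u_i$ is unitary, $\gamma_n(1) = \gamma_n\bigl(\sum_{i=1}^{n+1} e_{ii}\bigr) = \frac{1}{n+1}\sum_{i=1}^{n+1} u_i^* u_i = 1$.

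For complete positivity, I would exhibit $\gamma_n$ as a compression of an obvious $*$-representation. Fix a faithful representation $C^*(\bb F_n)\subset \Cal B(\Cal H)$ and define $V:\Cal H\to \bb C^{n+1}\otimes \Cal H$ by $V\xi = \frac{1}{\sqrt{n+1}}\sum_{i=1}^{n+1} e_i\otimes u_i\xi$. A short computation gives $V^*(A\otimes 1_{\Cal H})V = \gamma_n(A)$ for every $A\in M_{n+1}$, and the unitality identity above is equivalent to $V^*V = 1$. Hence $\gamma_n$ is u.c.p.

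For the kernel, I would use that distinct elements of $\bb F_n$ are linearly independent in $C^*(\bb F_n)$ (verified, for instance, via the left regular representation, which realizes them as an orthonormal set in $\ell^2(\bb F_n)$). Enumerating the products $u_i^* u_j$ with the convention $u_{n+1} = 1$: the diagonal $i = j$ always yields the identity element, while each off-diagonal pair $(i,j)$ yields a distinct nontrivial reduced word---either a length-two word $u_i^* u_j$ (for $i\neq j$ in $\{1,\dotsc,n\}$), a generator $u_j$ (when $i = n+1$), or the adjoint of a generator $u_i^*$ (when $j = n+1$). Consequently $\sum_{i,j} a_{ij} u_i^* u_j = \bigl(\sum_i a_{ii}\bigr)\cdot 1 + \sum_{i\neq j} a_{ij}u_i^* u_j = 0$ forces $a_{ij} = 0$ for $i\neq j$ and $\sum_i a_{ii} = 0$, exactly the statement that $(a_{ij})$ is a diagonal matrix of trace zero.

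The main obstacle here is essentially bookkeeping: one must enumerate the pairs $(i,j)\in\{1,\dotsc,n+1\}^2$ carefully and confirm that the distinctness and nontriviality of the resulting words in $\bb F_n$ is not disturbed by the introduction of the dummy unit $u_{n+1}$. No step is technically deep, and the proof is essentially a direct computation once the Stinespring dilation $V$ is written down.
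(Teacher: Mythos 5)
Your proof is correct. Note that the paper itself offers no proof of this lemma, deferring to the remarks after Definition 2.1 of Farenick--Paulsen; their argument is that the Choi matrix $\frac{1}{n+1}[u_i^*u_j] = \frac{1}{n+1}C^*C$ (with $C$ the row $(u_1,\dotsc,u_{n+1})$) is manifestly positive, which is just the unpurified form of your isometry $V$, so the two routes to complete positivity are essentially identical. Your kernel computation via linear independence of distinct group elements in $C^*(\bb F_n)$ is the standard one and is carried out correctly, including the check that the words $u_i^*u_j$ for $i\neq j$ (with the convention $u_{n+1}=1$) are pairwise distinct and nontrivial.
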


Remarkably, Farenick and Paulsen \cite[Theorem 2.4]{fp12} go on to show that:

\begin{theorem}[Farenick+Paulsen] \label{thm1} The map $\overline{\gamma_n}: M_{n+1}/J_{n+1}\to W_n$ is a complete order isomorphism where the quotient space $M_{n+1}/J_{n+1}$ is equipped with its canonical operator system structure as defined in \cite[Section 3]{kptt}.
\end{theorem}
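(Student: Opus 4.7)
The plan is to deduce the complete order isomorphism by showing that $\overline{\gamma_n}^{-1}: W_n \to M_{n+1}/J_{n+1}$ is completely positive, since Lemma \ref{lem1} together with the universal property of the operator system quotient already ensures that $\overline{\gamma_n}$ itself is a unital completely positive bijection. Rather than directly perturbing preimages in $M_{n+1}$ by elements of $J_{n+1}$, I would argue dually: show that every u.c.p.\ map $\phi: M_{n+1}/J_{n+1}\to \Cal B(H)$ factors as $\phi = \psi \circ \overline{\gamma_n}$ for some u.c.p.\ $\psi: W_n\to \Cal B(H)$. Applied to a Choi--Effros embedding, this forces $\overline{\gamma_n}^{-1}$ to be u.c.p.

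First I would unpack an arbitrary u.c.p.\ map $\phi: M_{n+1}/J_{n+1}\to \Cal B(H)$ by lifting it to a u.c.p.\ map $\tilde\phi: M_{n+1}\to \Cal B(H)$ vanishing on $J_{n+1}$. Stinespring's theorem applied to $M_{n+1}$ (whose only irreducible representation is the defining one) yields $\tilde\phi(e_{ij}) = V_i^* V_j$ for operators $V_1,\dotsc,V_{n+1} \in \Cal B(H,K)$ with $\sum_i V_i^* V_i = 1_H$. The vanishing condition on $J_{n+1}$ is equivalent to $V_i^* V_i = V_j^* V_j$ for all $i,j$, hence $V_i^* V_i = \frac{1}{n+1}1_H$, and $W_i := \sqrt{n+1}\,V_i$ are isometries.

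Next I would simultaneously dilate $W_1,\dotsc,W_{n+1}$ to unitaries $U_1,\dotsc,U_{n+1}$ on a common Hilbert space $\tilde H \supset H$, for instance by applying Halmos's explicit $2\times 2$ block construction to each $W_i$ on $\tilde H := H \oplus H$, which gives $U_i|_H = W_i$. Because $\bb F_n$ has no relations among its generators, the $U_i$ together induce a $*$-representation $\pi: C^*(\bb F_n)\to \Cal B(\tilde H)$ with $\pi(u_i) = U_i$. The compression $\psi(x) := P_H\,\pi(x)\,P_H|_H$ is u.c.p.\ as a compression of a $*$-representation, and a direct inner product calculation using $U_i|_H = W_i$ gives $\psi(u_i^* u_j) = W_i^* W_j = (n+1) V_i^* V_j$; composing with $\overline{\gamma_n}$ therefore recovers $\tilde\phi$, i.e., $\psi\circ \overline{\gamma_n} = \phi$.

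To finish, apply this factorization to a complete order embedding $\iota: M_{n+1}/J_{n+1}\hookrightarrow \Cal B(H)$ furnished by the Choi--Effros theorem: the resulting $\psi = \iota\circ \overline{\gamma_n}^{-1}$ is u.c.p., so for any $\alpha \in M_k(W_n)_+$ we have $\iota^{(k)}\bigl((\overline{\gamma_n}^{-1})^{(k)}(\alpha)\bigr) = \psi^{(k)}(\alpha) \geq 0$; since $\iota$ is a complete order embedding this forces $(\overline{\gamma_n}^{-1})^{(k)}(\alpha) \geq 0$ in $M_k(M_{n+1}/J_{n+1})$. The main technical step is the simultaneous dilation of the $W_i$, which is where the freeness of $\bb F_n$ is essential; conceptually, the crux is that $J_{n+1}$ encodes precisely the isometry condition on the Stinespring components, which then dilate without obstruction to unitaries.
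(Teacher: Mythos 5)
First, a caveat: the paper does not actually prove this statement --- it is quoted directly from Farenick--Paulsen \cite[Theorem 2.4]{fp12} --- so your argument can only be measured against their proof. Your overall strategy is the natural one and matches theirs in spirit: reduce to showing that every u.c.p.\ map $\phi:M_{n+1}/J_{n+1}\to\Cal B(H)$ factors as $\psi\circ\overline{\gamma_n}$ with $\psi$ u.c.p.\ on $W_n$, by writing $\phi\circ q$ in Stinespring form $e_{ij}\mapsto V_i^*V_j$, using the vanishing on $J_{n+1}$ to see that the $W_i=\sqrt{n+1}\,V_i$ are isometries, dilating to unitaries, and finally feeding a Choi--Effros embedding of $M_{n+1}/J_{n+1}$ into the factorization to conclude that $\overline{\gamma_n}^{-1}$ is completely positive. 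That last deduction, and the identification of the kernel condition with the isometry condition, are correct.

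There is, however, a genuine gap in the dilation step, located exactly at the index $n+1$. Because $u_{n+1}:=1$ in $C^*(\bb F_n)$, every $*$-representation $\pi$ is forced to satisfy $\pi(u_{n+1})=1_{\tilde H}$; you cannot declare $\pi(u_{n+1})=U_{n+1}$ where $U_{n+1}$ is the Halmos dilation of $W_{n+1}$, since that dilation is the identity only when $W_{n+1}$ is. As a result the identity $\psi(u_i^*u_j)=W_i^*W_j$ fails precisely when $i$ or $j$ equals $n+1$: what the compression actually yields is $\psi(u_i^*)=P_HU_i^*|_H$, the compression of $W_i^*$ to $H$, rather than the required $W_i^*W_{n+1}$, so the factorization breaks on the elements $u_i^*u_{n+1}=u_i^*$ and $u_{n+1}^*u_j=u_j$ of $W_n$. (A smaller inaccuracy: the $W_i$ are isometries from $H$ into the Stinespring multiplicity space $K_0$, not operators on $H$, so the $2\times 2$ Halmos matrix on $H\oplus H$ does not literally apply; one must instead extend each isometry to a unitary of a sufficiently large common superspace, which is always possible.) The gap is repairable: conjugate by the unitary $K_0\cong W_{n+1}H\oplus(K_0\ominus W_{n+1}H)$ that identifies $W_{n+1}H$ with $H$. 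This leaves every product $W_i^*W_j$ unchanged and normalizes $W_{n+1}$ to be the inclusion $H\hookrightarrow K_0\subset\tilde H$, so that $U_{n+1}:=1_{\tilde H}$ is a legitimate unitary extension of $W_{n+1}$ and $P_HU_i^*U_j|_H=W_i^*W_j$ holds for all $1\le i,j\le n+1$. With that normalization inserted, the remainder of your argument goes through.
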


The strategy of our proof of Theorem \ref{main1} will be to make use of the fact that matrix algebras are CP-stable.

\begin{lem}[Proposition 2.40 in \cite{gs1}] \label{lem2} Given $k$, for any $\delta>0$ there exists $\e>0$ so that for any C$^*$-algebra $A$ and any unital linear map $\phi: M_k\to A$ with $\nor{\phi}_k<1+\e$, there exists a u.c.p.\ map $\tilde\phi: M_k\to A$ so that $\nor{\tilde\phi - \phi}<\delta$.
\end{lem}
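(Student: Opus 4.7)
The plan is to argue by contradiction via a C$^*$-algebra ultraproduct, leveraging the qualitative principle that a unital complete contraction out of $M_k$ is automatically u.c.p.

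By Smith's inequality, for linear maps with domain $M_k$ one has $\nor{\phi}_{cb} = \nor{\phi}_k$, so the hypothesis reads $\nor{\phi}_{cb} < 1+\e$. The qualitative principle is the following: if $\phi : M_k \to A$ is unital with $\nor{\phi}_{cb} \leq 1$, then $\phi$ is u.c.p. One concrete route uses Wittstock's dilation $\phi(x) = V^*\pi(x)W$ with $\pi$ a nondegenerate $*$-representation and $\nor V = \nor W \leq 1$; unitality forces $V^*W = 1_A$, and combined with $\nor V, \nor W \leq 1$ this makes $V$ and $W$ isometries with $V = W$, so $\phi(\cdot) = V^* \pi(\cdot) V$ is c.p.

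For the quantitative statement, suppose it fails for some $\delta > 0$. Then there exist C$^*$-algebras $A_n$ and unital linear maps $\phi_n : M_k \to A_n$ with $\nor{\phi_n}_k < 1 + 1/n$ while no u.c.p.\ $\tilde\phi_n : M_k \to A_n$ satisfies $\nor{\tilde\phi_n - \phi_n} < \delta$. Fix a free ultrafilter $\omega$ on $\N$, form the C$^*$-ultraproduct $\Cal A \df \prod_\omega A_n$, and set $\phi_\infty(x) \df [\phi_n(x)]_\omega$. Then $\phi_\infty$ is unital and the matrix-norm bounds pass to the ultralimit to yield $\nor{\phi_\infty}_{cb} \leq 1$, whence $\phi_\infty$ is u.c.p.\ by the qualitative principle.

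It remains to lift $\phi_\infty$ to u.c.p.\ maps $\psi_n : M_k \to A_n$ with $[\psi_n(x)]_\omega = \phi_\infty(x)$, since then $\nor{\psi_n - \phi_n} \to 0$ along $\omega$, contradicting the choice of the sequence. Working with the Choi matrix $C_{\phi_\infty} \in M_k(\Cal A) \cong \prod_\omega M_k(A_n)$, which is positive with partial trace $1_{\Cal A}$, one obtains a positive representative $(S_n) \in \prod_n M_k(A_n)$ by taking a square root in the ultraproduct and lifting. Its partial trace $R_n \df \operatorname{tr}_1 S_n$ tends to $1_{A_n}$ along $\omega$, is therefore eventually invertible, and the twist $\tilde S_n \df (1 \otimes R_n^{-1/2}) S_n (1 \otimes R_n^{-1/2})$ is positive with $\operatorname{tr}_1 \tilde S_n = 1_{A_n}$ exactly, still representing $C_{\phi_\infty}$. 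The associated $\psi_n$ are u.c.p.\ and furnish the desired contradiction. The main obstacle is exactly this lifting step: one must simultaneously restore positivity and the partial-trace normalization at each finite stage without disturbing the ultrafilter class, and it is the finite-dimensionality of $M_k$ (via the Choi correspondence reducing everything to perturbations of a matrix in $M_k(A_n)$) that makes these simultaneous perturbations feasible.
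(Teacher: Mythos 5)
Your proof is correct and follows the same overall strategy as the paper's: assume failure, pass to a C$^*$-ultraproduct, observe that the limit map is automatically u.c.p., and then lift to contradict the choice of the sequence $(\phi_n)$. There are two points where you diverge in execution. First, to see that $\phi_\infty$ is u.c.p., you pass through complete contractivity, which requires the fact that $\nor{\phi}_{\cb}=\nor{\phi}_k$ for maps \emph{out of} $M_k$; this is true, but it is the less-standard companion to Smith's lemma (which concerns maps \emph{into} $M_k$), so it deserves a precise reference. The paper sidesteps this entirely: unitality together with $\nor{\phi_\infty}_k\leq 1$ gives $k$-positivity directly (Paulsen, Prop.\ 2.11 applied at the $k$-th matrix level), and Choi's theorem then upgrades a $k$-positive map on $M_k$ to a completely positive one --- a slightly cleaner route to the same conclusion. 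Second, for the lifting step the paper simply cites the proof of the Choi--Effros lifting theorem, whereas you carry it out by hand: lift the positive Choi matrix to a positive element of $\prod_n M_k(A_n)$ and restore the unitality normalization exactly by conjugating with $1\otimes R_n^{-1/2}$, which preserves positivity, fixes the partial trace, and perturbs the ultrafilter class by a vanishing amount. This is a genuine and worthwhile simplification in this finite-dimensional setting; in fact it is precisely the renormalization trick the paper itself deploys later (the conjugation $B\Psi'B$ in the proof of Theorem~\ref{main1}), so your argument makes the lemma self-contained where the paper relies on an external citation.
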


For the reader's convenience we provide a streamlined proof.

\begin{proof} Suppose by contradiction that there is some $\delta>0$ so that for every $n$ there is some unital linear map $\phi_n: M_k\to A_n$ into some C$^*$-algebra $A_n$ so that $\nor{\phi}_k< 1+ \frac{1}{n}$ so that $\nor{\psi - \phi}\geq \delta$ for any u.c.p.\ map $\psi: M_k\to A_n$. Fix an nonprinciple ultrafilter $\om$ on $\bb N$ and define $\Cal A := (A_n)_\om$ to be the ultrapower C$^*$-algebra associated to the sequence $(A_n)$ and $\om$ and $\widetilde{\Cal A} := \prod_n A_n$. Consider the map $\phi := (\phi_\bullet): M_k\to \Cal A$. Clearly $\phi$ is unital and $\nor{\phi}_k=1$ whence by \cite[Proposition 2.11]{paulsen} $\phi$ is $k$-positive. By Choi's theorem \cite[Theorem 3.14]{paulsen} $\phi$ is therefore u.c.p.\ and the proof of the Choi+Effros lifting theorem \cite[Theorem C.3]{BO} shows there is thus a u.c.p.\ lift $\tilde\phi: M_k\to \widetilde{\Cal A}$. However, this shows that the sequence $(\phi_n)$ is well-approximated by u.c.p.\ maps for $n\in\om$ generic, a contradiction.
\end{proof}

\begin{proof}[Proof of Theorem \ref{main1}] We begin by fixing $\delta>0$ and a unital C$^*$-algebra $A$. Suppose we have a unital linear map $\phi: W_n\to A$ with $\nor{\phi}_{n+1}< 1+ \e$ for some $\e>0$ sufficiently small and to be determined later. We will show that we can find a u.c.p.\ map $\psi: W_n\to A$ so that $\nor{\psi - \phi}<\delta$.

Let $\phi' := \phi\circ\gamma_n: M_{n+1}\to A$ which is again unital and linear with $\nor{\phi}_{n+1}<1+\e$. By Lemma \ref{lem2} we may choose $\e>0$ sufficiently small so that there is a u.c.p.\ map $\psi': M_{n+1}\to A$ so that $\nor{\psi' - \phi'}< \delta/16n^4$. Since $\phi'(e_{ii}) = \frac{1}{n+1}1_A$, we have that $\nor{\psi'(e_{ii}) - \frac{1}{n+1}1_A} < \delta/16n^4$ whence $b_i := \psi'(e_{ii})$ is uniformly invertible and positive. Let $B\in M_{n+1}(A)$ be the diagonal matrix such that $B_{ii} := b_i^{-1/2}$. Let $\Psi' := [\psi'(e_{ij})]\in M_{n+1}(A)$ be the Choi matrix associated to $\psi'$. Since $\Psi'$ is positive, so is $\Psi'' := B\Psi'B$, whence it defines a c.p.\ map $\psi'': M_{n+1}\to A$ via the reverse correspondence $\psi''(e_{ij}) := \Psi_{ij}''$. We can see manifestly that $\psi''(e_{ii}) = \frac{1}{n+1}1_A$ whence $\psi''$ is unital, $J_{n+1}\subset \ker(\psi'')$, and $\nor{\psi'' - \psi'} < \delta/4n^2$. 

Identifying $W_n$ with the quotient operator system $M_{n+1}/J_{n+1}$ by Theorem \ref{thm1}, since $J_{n+1}\subset \ker(\psi'')$ it follows by \cite[Proposition 3.6]{kptt} that there is u.c.p.\ map $\psi: W_n\to A$ so that $\sup_{i,j}\nor{\psi(u_i^*u_j) - \phi(u_i^*u_j)}< \delta/2n^2$. Alternatively, this is not difficult to see by setting $\psi := \psi''\circ\gamma_n^{-1}$ and unravelling the definition of the quotient operator system structure via the identification given by Theorem \ref{thm1}. In any case it follows by the small perturbation argument that $\nor{\psi - \phi}<\delta$, and we are done.
\end{proof}

Two formal weakenings of the LLP were introduced by the author and Isaac Goldbring: the \emph{local ultrapower lifting property} (LULP) 
\cite[Proposition 2.42]{gs1} and the \emph{approximate local lifting property} (ALLP) \cite[Definition 7.3]{gs2}.\footnote{The ALLP is implicitly formulated in the work of Effros and Haagerup \cite{eh}, where it is shown to be equivalent to the LLP.} Both definitions carry over straightforwardly to the category of operator systems. For instance, an operator system $X$ can be said to have the LULP if every u.c.p.\ map $\phi: X\to A_\om$ admits local u.c.p.\ lifts to $\ell^\infty(A)$. The following proposition is essentially contained in \cite{gs1,gs2}. We provide a sketch of the proof for the convenience of the reader.

\begin{prop} For an operator system $X$ the following statements are equivalent:

\begin{enumerate}
\item $X$ has the LLP;
\item $X$ has the ALLP;
\item $X$ has the LULP;
\item $X$ is CP-stable.
\end{enumerate}

\end{prop}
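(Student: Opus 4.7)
The plan is to establish the cyclic chain $(1)\Rightarrow(2)\Rightarrow(3)\Rightarrow(4)\Rightarrow(1)$. The first two implications are essentially formal: $(1)\Rightarrow(2)$ since an exact lift is in particular an approximate one, and $(2)\Rightarrow(3)$ by applying the ALLP to the sequence $c_\om(A)\hookrightarrow \ell^\infty(A)\to A_\om$ and then upgrading approximate local u.c.p.\ lifts to exact ones via standard reindexing along $\om$.

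For $(3)\Rightarrow(4)$, I argue by contradiction. Suppose CP-stability fails, witnessed by some f.d.\ $E\subset X$ and $\delta>0$: for every triple $(S,k,\e)$ with $E\subset S\subset X$ f.d.\ and $k\in\bb N$, $\e>0$, there is a C$^*$-algebra $A_{(S,k,\e)}$ and a unital linear map $\phi_{(S,k,\e)}:S\to A_{(S,k,\e)}$ with $\nor{\phi_{(S,k,\e)}}_k<1+\e$ admitting no u.c.p.\ $\delta$-approximation on $E$. For a suitable ultrafilter $\om$ on this directed index set, the ultraproduct map on $\bigcup S$ is a unital complete contraction, hence u.c.p., and extends by norm-continuity to a u.c.p.\ $\Phi: X\to (A_{(\cdot)})_\om$. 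By LULP, $\Phi|_E$ lifts to a u.c.p.\ map $\widetilde\Phi: E\to \ell^\infty(A_{(\cdot)})$ whose components $\delta$-approximate the $\phi_{(S,k,\e)}|_E$ for $\om$-generic indices, a contradiction.

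For $(4)\Rightarrow(1)$, let $\phi: X\to A/J$ be u.c.p., $E\subset X$ f.d., and $\delta>0$, and apply CP-stability to obtain an f.d.\ $E\subset S\subset X$ together with $k,\e>0$. The standard operator-space lifting lemma---that any f.d.\ subsystem of a complete quotient admits unital linear lifts of cb-norm arbitrarily close to $1$---provides a unital linear lift $\tilde\phi_S: S\to A$ with $\nor{\tilde\phi_S}_k<1+\e$. CP-stability then gives a u.c.p.\ $\psi: E\to A$ with $\nor{\psi-\tilde\phi_S|_E}<\delta$, so $q\circ\psi$ is a u.c.p.\ $\delta$-approximation of $\phi|_E$ in $A/J$; this already yields the ALLP. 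The upgrade from ALLP to LLP is the Effros--Haagerup-style step, which in this setting proceeds by assembling the approximate lifts into a u.c.p.\ map $E\to A_\om$ over $(A/J)_\om$ and then extracting a genuine lift $E\to A$ using the finite-dimensionality of $E$. I expect this upgrade---together with the verification in $(3)\Rightarrow(4)$ that the directed ultraproduct map is indeed u.c.p.\ and extends continuously to all of $X$---to be the main obstacle, both steps being the operator-system analogues of Proposition 2.42 of \cite{gs1} and the arguments surrounding Definition 7.3 of \cite{gs2}.
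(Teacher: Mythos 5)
Your cycle $(1)\Rightarrow(2)\Rightarrow(3)\Rightarrow(4)\Rightarrow(1)$ differs organizationally from the paper, which proves $(3)\Leftrightarrow(4)$ by citing \cite[Proposition 2.42]{gs1} and then runs $(1)\Rightarrow(2)\Rightarrow(3)$, $(4)\Rightarrow(2)$, $(2)\Rightarrow(1)$; your $(1)\Rightarrow(2)$, $(2)\Rightarrow(3)$ and $(3)\Rightarrow(4)$ are essentially the standard arguments and are fine. The genuine gap is in $(4)\Rightarrow(1)$. The ``standard operator-space lifting lemma'' you invoke --- that any finite-dimensional subsystem of a complete quotient $A/J$ admits unital linear lifts of cb-norm arbitrarily close to $1$ --- is false. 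If it were true, then averaging with the adjoint and applying the correction lemma \cite[Corollary B.9]{BO} (unital self-adjoint maps of cb-norm close to $1$ are uniformly close to u.c.p.\ maps) would show that \emph{every} operator system has the LLP, with no use of CP-stability whatsoever; but $\Cal B(\ell^2)$ fails the LLP. A complete quotient map only lets you lift individual elements with small norm; assembling such lifts along a basis of a finite-dimensional subspace costs a factor depending on the dimension, not $1+\e$. The nontrivial input the paper uses in its place is the Robertson--Smith theorem \cite{RS}: every u.c.p.\ map from a finite-dimensional operator system into $A/J$ admits unital $n$-positive liftings for all $n$ (a unital $2k$-positive lift has $\nor{\cdot}_k\leq 1$), and it is these lifts that CP-stability converts into u.c.p.\ maps witnessing the ALLP. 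Without Robertson--Smith or a substitute for it, your argument does not get started.

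Two smaller points. First, your closing upgrade from ALLP to LLP --- ``assembling the approximate lifts into a u.c.p.\ map over $(A/J)_\om$ and extracting a genuine lift'' --- is not yet an argument: an exact u.c.p.\ lift of the induced map into the ultrapower lives in $\ell^\infty(A)$ and does not descend to a lift of $\phi|_E$ through $A\to A/J$. The correct tool, which the paper cites, is Arveson's theorem that the u.c.p.\ maps liftable through $A\to A/J$ form a point-norm closed set \cite[Lemma C.2]{BO}; the quasicentral approximate unit convexity argument is hiding there and cannot be replaced by soft ultrapower reasoning. Second, in your $(3)\Rightarrow(4)$ the target is an ultraproduct of varying algebras over a directed index set, while the LULP as formulated concerns ultrapowers $A_\om$; this is reconcilable (and is handled in \cite[Proposition 2.42]{gs1}) but should be addressed rather than elided.
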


\noindent The equivalence of the first two statements essentially appears in the work of Effros and Haagerup \cite[Theorem 3.2]{eh}. We also remark that using the equivalence with the ALLP, it is easy to see that the LLP passes to inductive limits, noting that it suffices to check the ALLP only on a dense subalgebra.

\begin{proof} The equivalence of (3) and (4) is proved in  \cite[Proposition 2.42]{gs1}. The implication (1) $\Rightarrow$ (2)  is straightforward.  For (2) $\Rightarrow$ (3), we note that by the small perturbation we can require the approximate lifts to be unital, and we may also assume they are $\ast$-linear.  In conjunction with \cite[Corollary B.11]{BO} which shows that we can correct such an approximate lift to a u.c.p.\ map a controlled distance away (depending on the dimension of the domain), we can thus assume that the approximate lifts are u.c.p.\ from which the implication follows easily. We include a proof of (4) $\Rightarrow$ (2), though it closely follows the reasoning given in \cite[Proposition 7.7]{gs2}.

To this end, note that by the main result of \cite{RS} that for any finite-dimensional operator system, any u.c.p.\ map $\phi: E\to A/J$ admits $n$-positive unital liftings $\tilde\phi_n: E\to A$ for every $n$. Hence if $E$ was a finite dimensional subsystem of a CP-stable system $X$ and $\phi: X\to A/J$ was u.c.p.\ it would follow that for every $n$ there is a u.c.p.\ map $\psi: E\to A$ so that $\nor{\pi_J\circ\psi - \phi|_E}<\frac{1}{n}$, where $\pi_J: A\to A/J$ is the quotient $\ast$-epimorphism. Hence $X$ has the ALLP.

Finally, the implication (2) $\Rightarrow$ (1) follows from a foundational result of Arveson that liftable u.c.p.\ maps are closed in the point-norm topology: see \cite[Lemma C.2]{BO}.

\end{proof}

Let $\OS_n$ be the set of all complete isomorphism classes of $n$-dimensional operator systems. The set $\OS_n$ is naturally equipped with two complete metrics, the $\cb$-Banach distance and the weak metric: see \cite{gs2} for details. With the equivalence of LLP and CP-stability in hand, we give a new proof of a result of Kavruk \cite{kavruk}.

\begin{prop}[Kavruk]\label{prop-dual} A finite-dimensional operator system $E$ is exact if and only if the dual system $E^*$ is CP-stable.
\end{prop}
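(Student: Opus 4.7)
By the previous proposition, CP-stability of $E^*$ is equivalent to $E^*$ having the LLP, so the assertion amounts to: $E$ is exact if and only if $E^*$ has the LLP. The plan is to deduce each implication from the operator system tensor product characterizations of exactness and LLP (as developed in \cite{kptt}) together with Han-Paulsen-type duality $(F \otimes_{\min} G)^* \cong F^* \otimes_{\max} G^*$ for finite-dimensional operator systems.

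For the direction $(\Leftarrow)$, suppose $E^*$ is CP-stable. Theorem \ref{main2} applied to $E^*$ yields the complete order isomorphism $E^* \otimes_{\min} \mathcal{B}(\ell^2) \cong E^* \otimes_{\max} \mathcal{B}(\ell^2)$. Restricting to each finite-dimensional matrix subalgebra $M_n \subset \mathcal{B}(\ell^2)$ and invoking Han-Paulsen duality (using $M_n^* \cong M_n$ as operator systems), the equality on each $E^* \otimes M_n$ translates dually to the equality $E \otimes_{\min} M_n \cong E \otimes_{\max} M_n$. Passing to the direct limit over $n$ yields the tensor identity on $E \otimes \mathcal{B}(\ell^2)$ characterizing exactness of $E$ (this is the step that uses the KPTT identification of exactness with equality of $\otimes_{\min}$ and the enveloping tensor product on $\mathcal{B}(\ell^2)$).

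For the direction $(\Rightarrow)$, assume $E$ is exact; we argue by the ultralimit scheme of Lemma \ref{lem2}. Were $E^*$ to fail CP-stability, there would exist unital linear maps $\phi_n: E^* \to A_n$ with $\nor{\phi_n}_{\dim E^*} \to 1$ yet at positive distance from u.c.p.\ maps. Since $\dim E^* < \infty$, the $(\dim E^*)$-norm coincides with the cb norm, so the ultralimit $\phi_\omega: E^* \to (A_n)_\omega$ is unital completely contractive, hence u.c.p. Identifying u.c.p.\ maps $E^* \to B$ with appropriate positive elements of $E \otimes_{\max} B$ under finite-dimensional operator system duality, exactness of $E$ together with the quotient $\ell^\infty(A_n) \twoheadrightarrow (A_n)_\omega$ enables one to lift $\phi_\omega$ to a u.c.p.\ $\tilde\phi: E^* \to \ell^\infty(A_n)$; its coordinates then supply u.c.p.\ approximants to $\phi_n$ for $n$ in an $\omega$-generic set, contradicting the supposed failure.

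The main technical obstacle is the rigorous implementation of Han-Paulsen duality together with the tensor/quotient compatibility in both directions. Since the duality $(F \otimes_{\min} G)^* \cong F^* \otimes_{\max} G^*$ is cleanest when both factors are finite-dimensional, transferring the identifications to the infinite-dimensional factors $\mathcal{B}(\ell^2)$ and $\ell^\infty(A_n)$ requires careful approximation by finite-dimensional subsystems and attention to which operator system tensor norm (min, max, or enveloping el) appears at each step.
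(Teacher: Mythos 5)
There is a genuine gap --- in fact the backward direction as you have written it does not work, and the forward direction asserts rather than proves the key step. For $(\Leftarrow)$: first, invoking Theorem \ref{main2} here is circular in the paper's logical architecture, since Theorem \ref{main2} (= Theorem \ref{thm-kir94}) is proved via Lemma \ref{lem-cover}, which itself cites Proposition \ref{prop-dual}. Second, even setting circularity aside, the conclusion you extract --- $E\otimes_{\min}M_n\cong E\otimes_{\max}M_n$ --- is vacuous: matrix algebras are $(\min,\max)$-nuclear in the operator system category (this is exactly the fact used in Lemma \ref{lem-hope}), so this identity holds for \emph{every} operator system $E$ and cannot detect exactness. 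Exactness is a statement about quotients, i.e., $(\min,el)$-nuclearity, not a $(\min,\max)$ identity against matrix algebras. Third, $\Cal B(\ell^2)$ is not a direct limit of the $M_n$, and neither $\otimes_{\min}$ nor $\otimes_{\max}$ passes to such a limit in the way you need. For $(\Rightarrow)$: the claim that the $(\dim E^*)$-norm of a map out of a $(\dim E^*)$-dimensional operator system coincides with its cb-norm is false in general ($k$-positivity does not imply complete positivity except for special domains such as $M_k$, which is what makes Lemma \ref{lem2} work there); this is repairable by letting the matrix level tend to infinity in the negation of CP-stability, but as written it is an error. More seriously, the sentence ``exactness of $E$ \dots enables one to lift $\phi_\omega$'' is precisely the content to be proved, and the duality you invoke identifies u.c.p.\ maps $E^*\to B$ with positive elements of $E\otimes_{\min}B$ (as in \cite[Lemma 8.5]{kptt}, used in Lemma \ref{lem-cover}), not of $E\otimes_{\max}B$; with $\max$ in place of $\min$ the exactness hypothesis gives you nothing.

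For comparison, the paper's proof avoids tensor products entirely: it uses the Pisier-style characterization of exactness of $E$ via approximation of unital $\ast$-linear maps $\phi_\alpha\colon E_\alpha\to E$ with $\nor{\phi_\alpha}_k\to1$ by unital maps with cb-norm tending to $1$, and then dualizes this approximation statement (using that duality is well behaved for finite-dimensional operator systems) together with a compactness argument to obtain exactly the quantifier structure in the definition of CP-stability for $E^*$. If you want to salvage your approach to $(\Rightarrow)$, the route is essentially Kavruk's: use exactness in the form $E\otimes_{\min}(A/J)=(E\otimes_{\min}A)/(E\otimes_{\min}J)$ to lift the positive element of $E\otimes_{\min}(A/J)$ corresponding to $\phi_\omega$, keeping track of unitality and the Archimedeanization of the quotient cone.
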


\begin{proof} It is well known (see \cite{pis95, gs2}) that $E$ is an exact $n$-dimensional operator system if and only if any sequence of unital $\ast$-linear maps $\phi_\alpha: E_\alpha\to E$ so that $\nor{\phi_\alpha}_k\to 1$ for all $k$, there is a sequence of unital maps $\psi_\alpha: E_\alpha\to E$ with $\nor{\psi_\alpha}_{\cb}\to 1$ with $\nor{\psi_\alpha - \phi_\alpha}\to 0$. Dualizing (noting by \cite[Proposition 2.1]{jungepisier} or \cite{bp91} that this is a well behaved operation) and applying a standard compactness argument, we see that this is implies that $E^*$ is CP-stable. The converse follows similarly by unravelling the definitions.
\end{proof}

In the category of operator systems, the correct treatment of tensor products has only recently appeared in the work of Kavruk, Paulsen, Todorov, and Tomforde \cite{kptt1, kptt}. We refer to these works for the basic definitions and properties of various operator system tensor products. Using these ideas we give a new proof of a famous and difficult theorem of Kirchberg \cite{kir94}. A short and particularly elegant proof of the same result in the context of operator spaces was given by Pisier \cite{pis96}. A second elementary proof was recently discovered by Farenick and Paulsen \cite{fp12}. (See also \cite{han, oz}.)

\begin{theorem}[Kirchberg]\label{thm-kir94} If $E$ is a finite-dimensional operator system which is CP-stable, then $E\otimes_{\min} \Cal B(\ell^2) = E\otimes_{\max} \Cal B(\ell^2)$.
\end{theorem}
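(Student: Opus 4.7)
The plan is to use the equivalence of CP-stability with the LLP (established in the preceding proposition) and exploit the quotient-lifting property of u.c.p.\ maps. The identity $E\otimes_{\max}\Cal B(\ell^2)\to E\otimes_{\min}\Cal B(\ell^2)$ is always u.c.p., so it suffices to show the reverse inclusion of cones: every element of $M_n(E\otimes\Cal B(\ell^2))$ positive in the min tensor product is positive in the max tensor product.

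The key technical device in the first step is to realize a u.c.p.\ map on the min tensor product via Stinespring dilation inside a suitable ambient C*-algebra. Given a u.c.p.\ map $\phi: E\otimes_{\min}\Cal B(\ell^2)\to \Cal B(K)$, Arveson extension and Stinespring yield a $*$-representation $\pi$ whose restrictions to $E$ (via $e\mapsto\pi(e\otimes 1)$) and to $\Cal B(\ell^2)$ (via $b\mapsto\pi(1\otimes b)$) form a commuting u.c.p.\ pair; by the universal property of the commuting tensor product $\otimes_c$ of \cite{kptt}, this induces a u.c.p.\ map on $E\otimes_c\Cal B(\ell^2)$ agreeing with $\phi$ on the algebraic tensor product.

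To upgrade this to a u.c.p.\ map on the max tensor product, I would invoke CP-stability. Concretely, the commuting factorization can be presented through a quotient $A/J$, and the LLP (via its LULP formulation) allows the $E$-component to be lifted to $A$ compatibly with the $\Cal B(\ell^2)$-component, yielding the desired max-tensor-product structure. Alternatively, via Kavruk's duality (Proposition \ref{prop-dual}), exactness of $E^*$ gives approximations of $E$ by matrix subsystems $M_k$, on which all operator system tensor products trivially coincide, and a limiting argument then transfers the equality from the matrix case to $\Cal B(\ell^2)$.

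The main obstacle will be this last identification: the Stinespring step is classical, but bridging the commuting and maximal tensor product structures under the LLP hypothesis requires careful use of the operator system tensor product machinery of \cite{kptt} together with the approximate lifting afforded by CP-stability. Assembling these steps shows that the min cone sits inside the max cone, yielding the claimed operator system isomorphism.
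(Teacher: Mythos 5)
Your high-level goal (showing the min cone sits inside the max cone) is correct, and your ``alternative'' route is in fact the one the paper takes, but as written there are genuine gaps in both branches. The Stinespring step proves the wrong direction: dilating a u.c.p.\ map on $E\otimes_{\min}\Cal B(\ell^2)$ to a commuting pair only re-derives the trivial fact that the identity $E\otimes_c\Cal B(\ell^2)\to E\otimes_{\min}\Cal B(\ell^2)$ is u.c.p. To conclude that a min-positive element is max-positive you must control \emph{arbitrary} commuting pairs of u.c.p.\ maps, not just those arising from min-continuous ones, and your one-sentence ``upgrade'' (``the commuting factorization can be presented through a quotient $A/J$'') never specifies the quotient nor how the lift of the $E$-leg is made compatible with the $\Cal B(\ell^2)$-leg --- this is precisely where all the work in Kirchberg's theorem is concentrated, so it cannot be left as a gesture.

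The second branch is closer to the paper's argument but contains a substantive error. Exactness of $E^*$ (Proposition \ref{prop-dual}) does \emph{not} give approximations of $E$ by matrix \emph{subsystems}; dualizing the almost-complete-order embeddings $E^*\hookrightarrow M_{\ell}$ produces u.c.p.\ \emph{covering} maps $M_n^*\to E$ that are approximately surjective on the matricial positive cones of the min tensor product (this is Lemma \ref{lem-cover}, which itself requires the identification $M_k(E\otimes_{\min}\Cal B(\ell^2))^+\cong \CP(E^*,M_k(\Cal B(\ell^2)))$ and a u.c.p.\ correction argument). The relevant building block is therefore the dual system $M_n^*$, not $M_n$, and it is \emph{not} trivial that $M_n^*\otimes_{\min}\Cal B(\ell^2)=M_n^*\otimes_{\max}\Cal B(\ell^2)$: the paper proves this separately (Lemma \ref{lem-hope}), using the WEP of $\Cal B(\ell^2)$ to reduce to finite-dimensional second factors and then a duality computation from \cite{fp12}. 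Once these two lemmas are in place, the conclusion does follow by functoriality of $\otimes_{\max}$ and closedness of the max cone, so your instinct about the limiting argument is sound; but those two lemmas \emph{are} the content of the proof and must be supplied.
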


\begin{lem}\label{lem-cover} Let $E$ be a finite-dimensional operator system with the LLP. Then for every $\e>0, k$ there is $n$ and a u.c.p.\ map $\phi: M_n^* \to E$ so that for any positive $x\in M_k(E\otimes_{\min} \Cal B(\ell^2))^+$ there is $\hat x\in M_k(M_n^*\otimes_{\min} \Cal B(\ell^2))^+$ positive so that $\nor{x - (\phi\otimes\id)_k(\hat x) }<\e$.
\end{lem}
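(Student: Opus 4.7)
The plan is to construct $\phi$ by dualizing an approximate complete order embedding of $E^{*}$ into a matrix algebra. Since $E$ has the LLP it is CP-stable, and by Proposition \ref{prop-dual} the finite-dimensional operator system $E^{*}$ is exact. Thus for any $\delta>0$ (to be chosen small enough in terms of $\e$ and $k$ at the end), there exist $n$ and a u.c.p.\ map $j: E^{*}\to M_n$ with $\nor{j^{-1}|_{j(E^{*})}}_{\cb}<1+\delta$. Taking the operator system adjoint under the canonical finite-dimensional identifications $E^{**}\cong E$ and $M_n^{**}\cong M_n$ produces a u.c.p.\ map $\phi := j^{*}: M_n^{*}\to E$, which is the proposed covering map.

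Given a positive $x\in M_k(E\otimes_{\min}\Cal B(\ell^2))\cong E\otimes_{\min} M_k(\Cal B(\ell^2))$, I would use the Effros--Ruan-type duality for finite-dimensional operator systems to identify $x$ with a completely positive map $f_x: E^{*}\to M_k(\Cal B(\ell^2))$ whose $\cb$-norm equals $\nor{x}$. Transferring $f_x$ through the near-isomorphism, the map $\tilde f_x := f_x\circ j^{-1}: j(E^{*})\to M_k(\Cal B(\ell^2))$ satisfies $\tilde f_x\circ j = f_x$ and is $\cb$-bounded with $\nor{\tilde f_x}_{\cb}\leq (1+\delta)\nor{f_x}_{\cb}$, but need not be completely positive. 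Wittstock's extension theorem then provides a $\cb$-extension $\tilde g: M_n\to M_k(\Cal B(\ell^2))$ of the same $\cb$-norm, with $\tilde g(1_{M_n}) = f_x(e_{E^{*}}) =: a\geq 0$ and $\nor{a} = \nor{f_x}_{\cb}$; in particular $\nor{\tilde g}_{\cb}\leq (1+\delta)\nor{\tilde g(1)}$.

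The crucial step is to correct $\tilde g$ to a completely positive map. Conjugating by $(a+\eta)^{-1/2}$ for small $\eta>0$ yields a unital $\cb$-map $h:M_n\to M_k(\Cal B(\ell^2))$ with $\nor{h}_{\cb}\leq 1+O(\delta+\eta)$; by Smith's theorem $\nor{h}_{\cb}=\nor{h}_n$ for maps out of $M_n$, so Lemma \ref{lem2} (CP-stability of $M_n$) delivers a u.c.p.\ map $h': M_n\to M_k(\Cal B(\ell^2))$ with $\nor{h'-h}$ arbitrarily small as $\delta+\eta\to 0$. Setting $g(y) := (a+\eta)^{1/2}h'(y)(a+\eta)^{1/2}$ produces a completely positive $g$ with $\nor{g - \tilde g}$ controllably small in terms of $\delta$ and $\eta$.

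Finally, I would take $\hat x\in M_k(M_n^{*}\otimes_{\min}\Cal B(\ell^2))^+$ to be the positive element corresponding to $g$ under the same duality used for $f_x$. Then $(\phi\otimes\id)_k(\hat x)$ corresponds to $g\circ j$, and
\[
\nor{x - (\phi\otimes\id)_k(\hat x)} \;=\; \nor{f_x - g\circ j}_{\cb} \;=\; \nor{(\tilde g - g)\circ j}_{\cb} \;\leq\; \nor{\tilde g - g}_{\cb}\,\nor{j}_{\cb},
\]
which falls below $\e$ once $\delta$ and $\eta$ are chosen small enough. The main obstacle is the c.p.-correction step: one must translate the $\cb$-norm bound $\nor{\tilde g}_{\cb}\leq(1+\delta)\nor{\tilde g(1)}$ into genuine closeness to a completely positive map, requiring the $\eta$-perturbation to invert the normalizing element $a$ and the full strength of Lemma \ref{lem2} applied to the resulting unital renormalization.
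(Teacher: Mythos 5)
Your overall architecture coincides with the paper's: use Proposition \ref{prop-dual} to realize $E^*$ as almost completely order isomorphic to a subsystem of a matrix algebra, dualize the embedding to obtain $\phi$, identify $M_k(E\otimes_{\min}\Cal B(\ell^2))^+$ with $\CP(E^*,M_k(\Cal B(\ell^2)))$, transfer a given completely positive map through the near-isomorphism, extend to the full matrix algebra, and correct the extension to a genuinely completely positive map. The divergence, and the gap, is in the correction step. Having already extended $\tilde f_x$ to $\tilde g$ on all of $M_n$, you conjugate by $(a+\eta)^{-1/2}$ and assert that the result $h$ is unital with $\nor{h}_{\cb}\leq 1+O(\delta+\eta)$. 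Neither claim holds. First, $h(1_{M_n})=(a+\eta)^{-1/2}a(a+\eta)^{-1/2}$, which has norm distance $\eta/(\lambda_{\min}(a)+\eta)$ from $1$; if $0\in\sigma(a)$ (and $a=f_x(e_{E^*})$ need not be bounded below), this distance is exactly $1$ no matter how small $\eta$ is, so $h$ is not even approximately unital. Second, the only available estimate on the conjugated extension is $\nor{h}_{\cb}\leq\nor{(a+\eta)^{-1}}\,\nor{\tilde g}_{\cb}\leq(1+\delta)\nor{a}/(\lambda_{\min}(a)+\eta)$, which blows up as $\eta\to 0$: the hypothesis $\nor{\tilde g}_{\cb}\leq(1+\delta)\nor{\tilde g(1)}$ does not survive conjugation by a non-scalar operator, since $\tilde g$ is only norm-close to completely positive rather than completely positive. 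Without unitality and a $\cb$-norm close to $1$, Lemma \ref{lem2} cannot be invoked, so the proof as written breaks at its crucial step.

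The repair is to reverse the order of the operations ``normalize'' and ``extend,'' which is what the paper's proof does. Concretely: first perturb $x$ to $x+\eta(e_E\otimes 1)$ (an $\eta$-move inside the positive cone, harmless for $\e$-surjectivity) so that $a=f_x(e_{E^*})\geq\eta$ is invertible; write $f_x=a^{1/2}f'(\cdot)a^{1/2}$ with $f'$ u.c.p.; transfer $f'$ (not $f_x$) through $j^{-1}$ to obtain a \emph{unital} self-adjoint map on $j(E^*)$ of $\cb$-norm at most $1+\delta$; and only then extend to $M_n$. Since $1_{M_n}=j(e_{E^*})$ already lies in the subsystem $j(E^*)$, every extension is automatically unital, so \cite[Corollary B.9]{BO} (equivalently, Lemma \ref{lem2}) applies directly and yields a nearby u.c.p.\ map; conjugating back by $a^{1/2}$ costs only a factor of $\nor{a}$ in the final estimate. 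With this reordering your argument goes through and is essentially the proof given in the paper, which works throughout with the unital map $\psi\circ\phi_m^{-1}$ and its (automatically unital) isometric extension before invoking the u.c.p.\ approximation result.
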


\begin{proof} Let us fix $\e, k>0$. Using \cite[Lemma 8.5]{kptt} we may identify the positive cone $M_k(E\otimes_{\min} \Cal B(\ell^2))^+$ with the space $\CP(E^*, M_k(\Cal B(\ell^2)))$ of completely positive maps $\phi: E^*\to M_k(\Cal B(\ell^2))$. By Proposition \ref{prop-dual} $E^*$ is exact, so there are matricial operator systems $E_m\subset M_{\ell_m}$ and u.c.p.\ bijections $\phi_m: E^*\to E_m$ with $\nor{\phi_m^{-1}}_{\cb} \to 1$.

By pre-composition each map $\phi_m$ induces a map \[\Phi_{m,k}: \CP(M_{\ell_m}, M_k(\Cal B(\ell^2)))\to \CP(E^*, M_k(\Cal B(\ell^2)))\] which preserves unitality and is easily identified with the u.c.p.\ map \[(\phi_m^*\otimes\id)_k: M_k(M_{\ell_m}^*\otimes_{\min} \Cal B(\ell^2))\to M_k(E\otimes_{\min} \Cal B(\ell^2)).\](We are using that the minimal operator system tensor product is functorial: see \cite[Theorem 4.6]{kptt1}.)

Given a u.c.p.\ map $\psi: E^*\to \Cal B(H)$ we may pre-compose with $\phi_m^{-1}$ to obtain a unital, self-adjoint map  $\psi_m: E_m\to \Cal B(H)$ which we may isometrically extend to $\hat \psi_m: M_m\to \Cal B(H)$. As $\nor{\psi_m}_{\cb}\leq \nor{\phi_m^{-1}}_{\cb}\to 1$, by \cite[Corollary B.9]{BO} there is an approximating sequence to $(\hat \psi_m)$ consisting of u.c.p.\ maps $\upsilon_m: M_m \to \Cal B(H)$ with $\nor{\hat\psi_m - \upsilon_m}\leq 2(\nor{\phi_m^{-1}}_{\cb}-1)$. Via the identification with $\Phi_{m,k}$ it therefore follows that $(\phi_m^*\otimes\id)_k$ restricted to $M_k(M_{\ell_m}^*\otimes_{\min} \Cal B(\ell^2))^+$ is $\e$-surjective into $M_k(E\otimes_{\min} \Cal B(\ell^2))^+$ for $m$ sufficiently large.

\end{proof}

Note that $M_n$ is (trivially) exact and is CP-stable, whence $M_n^*$ shares these properties by Proposition \ref{prop-dual}. It would be interesting to know whether $M_n^*$ is a C$^*$-nuclear operator system in the sense of \cite{kavruk, kptt} --- as essentially noted in \cite[Theorem 6.7]{kavruk} this is predicted by Connes' embedding conjecture. (N.B.\ One's first instinct might be to conclude that C$^*$-nuclearity for all $M_n^*$ would force exactness and CP-stability to coincide for finite-dimensional operator systems; however, it is easy to find the basic fault in this argument.)

\begin{lem}\label{lem-hope} For all $n$, $M_n^*\otimes_{\min} \Cal B(\ell^2) = M_n^*\otimes_{\max} \Cal B(\ell^2)$. 
\end{lem}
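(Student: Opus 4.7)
The plan is to identify the min and max positive cones at every matrix level and show they coincide via an explicit compression construction centered on the Choi matrix of $\id: M_n \to M_n$.

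As in the proof of Lemma \ref{lem-cover}, \cite[Lemma 8.5]{kptt} identifies the positive cone $M_k(M_n^* \otimes_{\min} \Cal B(\ell^2))^+$ with the space of completely positive maps $M_n \to M_k(\Cal B(\ell^2))$ and hence, via the Choi correspondence, with $M_{nk}(\Cal B(\ell^2))^+$. Concretely, an element $\sum_{p,q} e_{pq}^* \otimes T_{pq}$ of $M_n^* \otimes \Cal B(\ell^2)$ is min-positive precisely when $[T_{pq}] \in M_n(\Cal B(\ell^2))^+$. Since the canonical map $M_n^* \otimes_{\max} \Cal B(\ell^2) \to M_n^* \otimes_{\min} \Cal B(\ell^2)$ is u.c.p., it suffices to show that every such min-positive element is also max-positive.

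To this end, I would introduce the element $P := \sum_{i,j=1}^n E_{ij} \otimes e_{ij}^* \in M_n(M_n^*)$, which is the Choi matrix of $\id: M_n \to M_n$ and hence positive: $P \in M_n(M_n^*)^+$. Given any $T = [T_{pq}] \in M_n(M_k(\Cal B(\ell^2)))^+$, the defining property of the operator system max tensor product \cite{kptt1} guarantees $P \otimes T \in M_{n^2}(M_n^* \otimes_{\max} M_k(\Cal B(\ell^2)))^+$. Let $\alpha \in M_{n^2, 1}$ be the column vector with $\alpha_{(i,p)} = \delta_{i, p}$. A direct index computation yields
\[ \alpha^*(P \otimes T)\alpha = \sum_{p,q} e_{pq}^* \otimes T_{pq}, \]
which is the Choi preimage of $T$ in $M_n^* \otimes M_k(\Cal B(\ell^2))$. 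Using the functorial identification $M_k(M_n^* \otimes_{\max} \Cal B(\ell^2)) \cong M_n^* \otimes_{\max} M_k(\Cal B(\ell^2))$ together with closure of the max positive cone under such compressions, this element is max-positive at level $k$ of $M_n^* \otimes_{\max} \Cal B(\ell^2)$. The reverse inclusion of cones being automatic, we conclude $M_n^* \otimes_{\min} \Cal B(\ell^2) = M_n^* \otimes_{\max} \Cal B(\ell^2)$.

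The principal technical burden is verifying the compression identity through careful index bookkeeping; conceptually, the crucial point is that the Choi matrix of $\id: M_n \to M_n$ furnishes a canonical positive element of $M_n(M_n^*)^+$ witnessing the max-positivity of arbitrary positive matrices in $M_n(\Cal B(\ell^2))$ via a simple diagonal compression.
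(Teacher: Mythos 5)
Your proposal takes a genuinely different route from the paper: the paper first uses the WEP of $\Cal B(\ell^2)$ together with \cite[Lemma 6.1, Theorem 6.7]{kptt} to reduce to finite-dimensional second factors $E$, and then settles that case by the Farenick--Paulsen duality $(M_n^*\otimes_{\min}E)^*\cong M_n\otimes_{\max}E^*$ combined with the $(\min,\max)$-nuclearity of the matrix algebra $M_n$ and a double-dual argument. You instead attempt a direct comparison of the matricial cones using the Choi matrix of the identity as a positive element of $M_n(M_n^*)$ and a maximally-entangled compression.

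The decisive problem is that your argument proves far too much. Nowhere do you use any property of $\Cal B(\ell^2)$: if the argument were correct as written, it would show $M_n^*\otimes_{\min}T=M_n^*\otimes_{\max}T$ for \emph{every} operator system $T$, i.e., that $M_n^*$ is $(\min,\max)$-nuclear, and in particular C$^*$-nuclear. But the paper explicitly flags C$^*$-nuclearity of $M_n^*$ as an open problem predicted by Connes' embedding conjecture (see the remark preceding the lemma and \cite[Theorem 6.7]{kavruk}); by Kavruk's work this property is essentially equivalent to Kirchberg's conjecture. A one-page cone computation cannot settle that, so there must be an error, and you should treat the appearance of an unconditional proof of $(\min,\max)$-nuclearity as a red flag rather than a bonus.

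The error is hiding in the ``careful index bookkeeping'' you defer. You invoke the Choi--Jamio\l kowski correspondence twice with incompatible variances: once to assert that $\sum_{p,q}e_{pq}^*\otimes T_{pq}$ is min-positive exactly when $[T_{pq}]\geq 0$ (this is the identification of $(S^*\otimes_{\min}T)^+$ with $\CP(S,T)$ from \cite[Lemma 8.5]{kptt}), and once to assert that $P=\sum_{i,j}E_{ij}\otimes e_{ij}^*$ lies in $M_n(M_n^*)^+$ (this is the \emph{dual} matrix ordering on $M_n^*$). These two identifications are not the same convention: they differ by a (partial) transpose, which is a positive but not completely positive operation. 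Once the conventions are aligned, the compression $\alpha^*(P\otimes T)\alpha$ does not land on the Choi preimage of $T$ but on its partial transpose, and max-positivity of partial transposes is a strictly stronger condition than min-positivity --- this is precisely the gap between the two cones that separates $\otimes_{\min}$ from $\otimes_{\max}$ here. I would recommend abandoning the direct compression strategy and following the paper's route: reduce to finite-dimensional $E$ via the WEP, and then use \cite[Proposition 1.9]{fp12} together with $M_n\otimes_{\min}E^*=M_n\otimes_{\max}E^*$.
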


\begin{proof} Since $\Cal B(\ell^2)$ has the WEP, by \cite[Lemma 6.1 and Theorem 6.7]{kptt} it suffices to check that $M_n^*\otimes_{\min} E = M_n^*\otimes_{\max} E$ for any finite-dimensional operator system $E$. Using \cite[Proposition 1.9]{fp12} we have that $(M_n^*\otimes_{\min} E)^* \cong M_n\otimes_{\max} E^* \cong M_n\otimes_{\min} E^*$ as operator systems. By the same \[M_n^*\otimes_{\min} E \cong (M_n^*\otimes_{\min} E)^{**}\cong (M_n\otimes_{\min} E^*)^*\cong M_n^*\otimes_{\max} E,\] and we are done.
\end{proof}

\begin{proof}[Proof of Theorem \ref{thm-kir94}] Given $\e, k>0$, by Lemma \ref{lem-cover} we can find $n$ such that there is a u.c.p.\ map $\phi: M_n^*\to E$ so that $(\phi\otimes\id)_k: M_k(M_n^*\otimes_{\min} \Cal B(\ell^2))^+\to M_k(E\otimes_{\min} \Cal B(\ell^2))^+$ is $\e$-surjective. By Lemma \ref{lem-hope} we have that $M_k(M_n^*\otimes_{\min} \Cal B(\ell^2))^+ = M_k(M_n^*\otimes_{\max} \Cal B(\ell^2))^+$. Since the maximal tensor norm is functorial \cite[Theorem 5.5]{kptt1}, it follows that $(\phi\otimes\id)_k$ maps $M_k(M_n^*\otimes_{\max} \Cal B(\ell^2))^+$ into $M_k(E\otimes_{\max} \Cal B(\ell^2))^+$. As $\e$ was arbitrary this shows that $M_k(E\otimes_{\min} \Cal B(\ell^2))^+\subset M_k(E\otimes_{\max} \Cal B(\ell^2))^+$, and we are done.

\end{proof}

\end{document}